\theoremstyle{plain}
\newtheorem{theorem}{Theorem}[section]
\newtheorem{proposition}[theorem]{Proposition}
\begin{document}

\title{On the infinitary van der Waerden Theorem}
\author{Shahram Mohsenipour}

\address{School of Mathematics, Institute for Research in Fundamental Sciences (IPM)
        P. O. Box 19395-5746, Tehran, Iran}\email{mohseni@ipm.ir}


\subjclass [2010] {05D10}
\keywords{Infinitary van der Waerden's theorem}
\begin{abstract} We give a purely combinatorial proof for the infinitary van der Waerden's theorem.
\end{abstract}
\maketitle
\bibliographystyle{amsplain}
\section{Introduction}
In \cite{thomasshahram}, Thomas Brown and the author introduced the following infinitary generalization of van der Waerden's theorem which has the same relation with van der Waerden's theorem as the Carlson-Simpson theorem \cite{carlsonsimpson} has with the Halse-Jewett theorem \cite{hj}.\\

{\em \textbf{Theorem}. Let $k\geq 2$ be a positive integer. Then, for any finite coloring of $\mathbb{N}$, there exist positive integers $d_1,d_2,d_3,\dots$ and a color $\gamma$ such that for every $n\in\mathbb{N}$, there exists a positive integer $a_n$ such that the set
\[
\big{\{}a_n+j_1d_1+\cdots+j_nd_n|0\leq j_1,\dots,j_n\leq k-1\big{\}}
\]
is monochromatic with the color $\gamma$.}\\


It is possible to deduce the above theorem from the Carlson-Simpson theorem by generalizing the usual proof of deducing van der Waerden's theorem from the Hales-Jewett theorem. The proof given in \cite{thomasshahram} uses a more elementary fact on uniformly recurrent infinite words from \cite{justinpirillo} which has its root in symbolic dynamics. It is worth noting that the above theorem can also be easily deduced from the Furstenberg-Weiss Theorem (\cite{furstenbergweiss}, Theorem 1.5) by successive iteration. In this note we present a direct combinatorial proof.

For the convenience of the reader and at the suggestion of the referee, we show in the next section how the above theorem can be related to the Carlson-Simpson theorem and then in Section 3, we present our elementary proof. Now, we explain some notation and conventions that we will use in the sequel. For two finite subsets $A,B$ of $\mathbb{N}$, by $A<B$, we mean that every element of $B$ is greater than every element of $A$. By interval, we mean an interval of positive integers which is a set of the form $I=\{a,a+1,a+2,\dots,b\}$, for some positive integers $a\leq b$. We also denote $I$ by $[a,b]$, and for $1\leq i\leq b-a+1$, by $I[i]$ we mean the $i$th element of the interval $[a,b]$, namely, $a+i-1$. The number of elements of $I$ is denoted by $|I|$. If $b$ is a non-negative integer and $A$ is a set of integers, then $\mathbb{T}^b A$ will denote the set $\{a+b|a\in A\}$. Let $c$ be a positive integer, by a $c$-coloring, we mean a coloring with $c$ many colors and furthermore the set of colors is $[c]=\{1,\dots,c\}$. For positive integers $k\geq 2$, and $c\geq 1$, $W(k,c)$ will denote the van der Waerden number which is the least integer $n$ such that for any $c$-coloring of $[n]$, there exists a monochromatic arithmetic progression of length $k$.

\section{Arithmetic Progressions and the Carlson-Simpson Theorem}

The original van der Waerden's theorem says that for a given $k\geq2$ and any finite coloring of the set of positive integers, $\mathbb{N}$, there exists a monochromatic arithmetic progression of length $k$, namely, there are $a,d\in\mathbb{N}$ such that $a,a+d,\dots,a+(k-1)d$, all have the same color. It is easy to see that the obvious generalization to the infinite case is not correct and in fact there is a 2-coloring of $\mathbb{N}$ such that for no $a,d\in\mathbb{N}$, $a,a+d,a+2d,a+3d,\dots$, all have the same color, in other words, there is no monochromatic arithmetic progression of infinite length.

Now recall that van der Waerden's theorem can be deduced from the Hales-Jewett theorem, which itself has an infinite generalization, i.e., the Carlson-Simpson theorem. So in order to obtain a suitable explicit infinite generalization of van der Waerden's theorem, one might ask: What does the Carlson-Simpson theorem say about arithmetic progressions? It is easily seen that the answer is exactly what we stated above as the infinite generalization of van der Waerden's theorem. To see this, we first bring here the statements of the above mentioned theorems. We follow Walters' paper \cite{walters1}. Let $[k]=\{1,2,\dots,k\}$ and let $[k]^N$ be the set of all sequences $a_1a_2\dots a_N$ of length $N$ such that $a_i\in[k]$ for $i=1,\dots,N$. For $a\in[k]^N, \gamma\subseteq\{1,\dots,N\}$ and $x\in[k]$, let $a\oplus x\gamma$ denote the sequence $b\in[k]^{N}$ defined by $a_i=b_i$ if $i\notin\gamma$ and $b_i=x$ otherwise. A {\em combinatorial line} is a set of the form $\{a\oplus x\gamma |1\leq x\leq k\}$. The set $\gamma$ is called the set of active coordinates. Now we can state the Hales-Jewett theorem:

\begin{theorem}[Hales-Jewett]
For $k,c$ there exists $N$ such that whenever $[k]^N$ is $c$-colored, there exist $a\in[k]^N$ and $\gamma\subseteq[N]$ such that the set $\{a\oplus x\gamma |1\leq x\leq k\}$ is monochromatic.
\end{theorem}

Now observe that the following mapping
\[
a_1a_2\dots a_N\mapsto (a_1-1)+(a_2-1)k+\cdots+(a_N-1)k^{N-1}+1
\]
gives a bijection $[k]^N\rightarrow\{1,2,\dots,k^N\}$ and the image of the monochromatic line $\{a\oplus x\gamma |1\leq x\leq k\}$ is the arithmetic progression
\[
\Big{\{}\sum_{i\notin\gamma}(a_i-1)k^{i-1}+x(\sum_{i\in\gamma}k^{i-1})+1 | 0\leq x\leq k-1\Big{\}}.
\]

Now define a {\em word} on an {\em alphabet} $[k]$ to be an element of $[k]^N$ for some $N$. For $N_2>N_1$, we say that a word $a^2\in[k]^{N_2}$ extends a word $a^1\in[k]^{N_1}$ and denote it by $a^1<_{e}a^2$, if $a^2_i=a^1_i$ for $i=1,\dots,N_1$. Let $Q(k)$ denote the set of all words on alphabet $[k]$.

\begin{theorem}[Carlson-Simpson]
Whenever $Q=Q(k)$ is finitely colored, there exist an infinite strictly increasing sequence of positive integers $N_n$, an infinite sequence of words $a^1<_{e}a^2<_{e}a^3<_{e}\cdots$, where $a^n\in [N_n]^k$ and an infinite sequence of finite sets $\gamma_1<\gamma_2<\gamma_{3}<\cdots$, where $\gamma_n\subseteq[N_n]$ such that
the set
\[
\big{\{}a^n\oplus x_1\gamma_1\oplus\cdots\oplus x_n\gamma_n | 1\leq x_1,\dots x_n\leq k, n\in\mathbb{N}\big{\}}
\]
is monochromatic.
\end{theorem}
We just mention here the case $n=2$ and leave the full case to the reader. In this case it is enough to see that the image of
\[
\big{\{}a^2\oplus x_1\gamma_1\oplus x_2\gamma_2 | 1\leq x_1, x_2\leq k\big{\}}
\]
under the above mentioned mapping is
\[
\Big{\{}\sum_{i\notin\gamma_1\cup\gamma_2}(a^2_i-1)k^{i-1}+x_1(\sum_{i\in\gamma_1}k^{i-1})+x_2(\sum_{i\in\gamma_2}k^{i-1})+1 | 0\leq x_1,x_2\leq k-1\Big{\}}.
\]

\section{Infinitary van der Waerden's Theorem}

Fix positive integers $k\geq 2$, and $c\geq 1$. We inductively define sequences of positive integers $\langle W_n:n\in\mathbb{N}\rangle$ and $\langle c_n:n\in\mathbb{N}\rangle$ (depending on $k,c$) as follows. Put $W_1=W(k,c), W_{n+1}=W(k,c_n)$ and $c_n=c^{W_n\cdots W_1}$. Now let $I$ be an interval with $|I|=W_1$. Now from $I$, we inductively define a sequence of intervals $I_n, n\in\mathbb{N}$ as follows. Let $I_1=I$ and
\[
I_{n+1}=I_n\cup \mathbb{T}^{W_n\cdots W_1}I_n\cup \mathbb{T}^{2W_n\cdots W_1}I_n\cup\cdots\cup\mathbb{T}^{(W_{n+1}-1)W_n\cdots W_1}I_n.
\]
For instance for $n=2$, we have
\[
I_{2}=I_1\cup \mathbb{T}^{W_1}I_1\cup \mathbb{T}^{2W_1}I_1\cup\cdots\cup\mathbb{T}^{(W_{2}-1)W_1}I_1,
\]
which according to $|I_1|=W_1$, it is easily seen that $I_2$ is an interval with $|I_2|=W_2W_1$. Similarly for each $n\in\mathbb{N}$, we have that $I_n$ is an interval with $|I_n|=W_n\cdots W_1$. Obviously in the above definition, $I_n$ depends on $k,c,n,I$, so we denote it by $I_n=\mathcal{I}(I,n,k,c)$. If $k,c$ were fixed and were clear from the context, then we may denote it by $I_n=\mathcal{I}(I,n)$. We will also use the easy fact that for any positive integer $b$, the interval $\mathbb{T}^b \mathcal{I}(I,n,k,c)$ can be written in the form of $\mathcal{I}(J,n,k,c)$ for some interval $J$ with $|J|=W_1$. In fact due to the additive nature of the definition, we have $\mathbb{T}^b \mathcal{I}(I,n,k,c)=\mathcal{I}(\mathbb{T}^{b}I,n,k,c)$.

\begin{proposition}\label{first}
Let $k\geq 2$, $c\geq 1$, and $n\geq 1$ be positive integers and let $I$ be an interval with $|I|=W_1$. Then for any $c$-coloring of $\mathcal{I}(I,n)$, there exist $a, d_1,\dots, d_n$ such that $d_i\leq W_1\cdots W_i$ for $i=1,\dots,n$ and the following set
\[
\big{\{}a+j_1d_1+\cdots+j_nd_n|0\leq j_1,\dots,j_n\leq k-1\big{\}}
\]
is included in $\mathcal{I}(I,n)$ and is monochromatic.
\end{proposition}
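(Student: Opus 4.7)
The plan is to proceed by induction on $n$, exploiting the recursive structure built into the definition of $\mathcal{I}(I,n)$. For the base case $n=1$ we have $\mathcal{I}(I,1)=I$ with $|I|=W_1=W(k,c)$, so any $c$-coloring of $I$ contains a monochromatic $k$-term arithmetic progression directly from the definition of the van der Waerden number; the common difference $d_1$ is automatically at most $W_1$.

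For the inductive step from $n$ to $n+1$, I would decompose $\mathcal{I}(I,n+1)$ as the disjoint union of the $W_{n+1}$ consecutive translates $\mathbb{T}^{iW_n\cdots W_1}\mathcal{I}(I,n)$ for $i=0,\ldots,W_{n+1}-1$. Each translate has exactly $W_n\cdots W_1$ elements, so the coloring restricted to it is one of at most $c^{W_n\cdots W_1}=c_n$ possible ``patterns''. Define an auxiliary $c_n$-coloring $\chi'$ of $[W_{n+1}]$ by letting $\chi'(i+1)$ record the pattern on the $i$-th translate. Since $W_{n+1}=W(k,c_n)$, there is a monochromatic $k$-term arithmetic progression under $\chi'$, say at indices $i_0,i_0+e,\ldots,i_0+(k-1)e$, meaning these $k$ translates all carry the \emph{same} color pattern. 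Apply the inductive hypothesis to the $i_0$-th translate --- which by the additivity $\mathbb{T}^b\mathcal{I}(I,n,k,c)=\mathcal{I}(\mathbb{T}^bI,n,k,c)$ recorded just before the proposition equals $\mathcal{I}(\mathbb{T}^{i_0W_n\cdots W_1}I,n)$ --- to produce $a$ and $d_1,\ldots,d_n$ yielding a monochromatic $n$-dimensional combinatorial box inside it. Setting $d_{n+1}:=e\cdot W_n\cdots W_1$, translation by $d_{n+1}$ moves among translates carrying identical patterns, so the $(n+1)$-dimensional box $\{a+j_1d_1+\cdots+j_{n+1}d_{n+1}:0\le j_i\le k-1\}$ is monochromatic, and it lies inside $\mathcal{I}(I,n+1)$ by construction.

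The main technical checkpoint is the observation that each translated copy of $\mathcal{I}(I,n)$ is itself of the form $\mathcal{I}(J,n)$ for a suitable starting interval $J$ of size $W_1$; this is exactly the additivity remark highlighted just before the proposition, and without it the induction would not close. One small discrepancy in the statement is worth flagging: the literal bound $d_i\le W_i$ seems to need reading as $d_i\le W_i\cdots W_1$, since the new difference $d_{n+1}=eW_n\cdots W_1$ with $1\le e\le W_{n+1}-1$ gives only $d_{n+1}\le W_{n+1}\cdots W_1$. The existence of \emph{some} explicit bound on the differences, rather than its precise form, is all that is needed to subsequently pass from this finitary proposition to the infinitary theorem.
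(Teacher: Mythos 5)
Your proof is correct and follows essentially the same route as the paper: the same decomposition of $\mathcal{I}(I,n+1)$ into $W_{n+1}$ translated blocks, the same auxiliary $c_n$-coloring of the block indices by their color patterns, van der Waerden at the block level, and the induction hypothesis applied to the first block of the resulting progression. Your flag about the bound is well taken --- the paper sets $d_{n+1}=d^*$ (the \emph{index} difference) and asserts $d_{n+1}\le W_{n+1}$, but the actual positional shift between the chosen blocks is $d^*W_n\cdots W_1$, so your value $d_{n+1}=e\,W_n\cdots W_1\le W_{n+1}\cdots W_1$ is the correct one, and, as you observe, any explicit bound on the differences is enough for the subsequent pigeonhole argument in the infinitary theorem.
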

\begin{proof}
The proof is by induction on $n$. The case $n=1$ is just van der Waerden's theorem and $d_1\leq W_1$ is obvious. Now, assume that the assertion is true for $n$, we prove it for $n+1$. Let $\chi\colon\mathcal{I}(I,n+1)\rightarrow[c]$ be a $c$-coloring. By definition, we have
\[
\mathcal{I}(I,n+1)=\bigcup_{i=0}^{W_{n+1}-1}\mathbb{T}^{iW_n\cdots W_1}\mathcal{I}(I,n).
\]
For simplicity, we denote the interval $\mathbb{T}^{iW_n\cdots W_1}\mathcal{I}(I,n)$ by $J_i$ for $i=0,1,\dots,$ $W_{n+1}-1$ (Recall that $|J_i|=W_n\cdots W_1$). Now, we define a coloring $\chi^*$ on the set $\{0,1,\dots,W_{n+1}-1\}$ as follows. For $0\leq i<j< W_{n+1}$, we put $\chi^*(i)=\chi^*(j)$ if for every $1\leq t\leq W_n\cdots W_1$, we have $\chi(J_i[t])= \chi(J_j[t])$. Since $\chi$ is a $c$-coloring, the number of colors for $\chi^*$ is $c^{W_n\cdots W_1}=c_n$. Thus, from $W_{n+1}=W(k,c_n)$, it follows that there exist $0\leq b_1<b_2<\cdots<b_k\leq W_{n+1}-1$ in the set $\{0,1,\dots,W_{n+1}-1\}$ with
\[
b_2-b_1=b_3-b_2=\cdots=b_k-b_{k-1}=:d^*\leq W_{n+1}
\]
such that $\chi^*(b_1)=\chi^*(b_2)=\cdots=\chi^*(b_k)$. Having
\[
J_{b_1}=\mathbb{T}^{b_1W_n\cdots W_1}\mathcal{I}(I,n)=\mathcal{I}(\mathbb{T}^{b_1W_n\cdots W_1}I,n),
\]
we can use the induction hypothesis for the interval $\mathbb{T}^{b_1W_n\cdots W_1}I$ with the restriction of the $c$-coloring $\chi$ to this interval and thus, obtain positive integers $a,d_1,\dots,d_n$ and a color $\gamma\in[c]$ such that the set
\[
\big{\{}a+j_1d_1+\cdots+j_nd_n|0\leq j_1,\dots,j_n\leq k-1\big{\}}
\]
is included in $J_{b_1}$ and is monochromatic with the color $\gamma$. Moreover, we have $d_1\leq W_1,\dots,d_n\leq W_1\cdots W_n$. Now, recall the definition of $d^*$ and notice that the intervals $J_i$'s have lengths equal to $W_1\cdots W_n$. Thus, we have
\begin{eqnarray*}
 \mathbb{T}^{jW_1\cdots W_nd^*}J_{b_1}&=&\mathbb{T}^{(jd^*+1)W_1\cdots W_n}\mathcal{I}(I,n)\\
                                      &=& \mathbb{T}^{(b_{1+j})W_1\cdots W_n}\mathcal{I}(I,n)
                                      = J_{b_{1+j}}
\end{eqnarray*}
for $j=0,\dots,k-1$.

So, for $1\leq i\leq W_1\cdots W_n$ and $j=0,\dots,k-1$, we get the relation
\[
J_{b_{1+j}}[i]=J_{b_1}[i]+jW_1\cdots W_nd^*.
\]
Now, let $a^*\in J_{b_1}$, then there is $1\leq i^*\leq W_n\cdots W_1$ such that $a^*=J_{b_1}[i^*]$. Therefore, for $j=0,\dots,k-1$, we have
\begin{eqnarray*}
\chi(a^*)=\chi(J_{b_1}[i^*])&=&\chi(J_{b_{1+j}}[i^*])\\
                            &=&\chi(J_{b_1}[i^*]+jW_1\cdots W_nd^*)\\
                            &=&\chi(a^*+jW_1\cdots W_nd^*).
\end{eqnarray*}
Putting this together with the induction hypothesis, we infer that the set
\[
\big{\{}a+j_1d_1+\cdots+j_nd_n+j_{n+1}W_1\cdots W_nd^*|0\leq j_1,\dots,j_n,j_{n+1}\leq k-1\big{\}}
\]
is monochromatic with the color $\gamma$ too. Now, set $d_{n+1}=W_1\cdots W_nd^*$. Recall that $d^*\leq W_{n+1}$ which implies $d_{n+1}\leq W_1\cdots W_{n+1}$  and observe that the above set is included in $J_{b_1}\cup\cdots\cup J_{b_k}$ which is included in $\mathcal{I}(I,n+1)$. So, the assertion is proved for $n+1$ and the proof is complete.
\end{proof}
\begin{theorem}\label{infvdw}
Let $k\geq 2$, and $c\geq 1$ be positive integers. Then, for any $c$-coloring of $\mathbb{N}$, there exist positive integers $d_1,d_2,\dots,d_n,\dots$ and a color $\gamma\in [c]$ such that for every $n\in\mathbb{N}$, there exists a positive integer $a_n$ such that the set
\[
\big{\{}a_n+j_1d_1+\cdots+j_nd_n|0\leq j_1,\dots,j_n\leq k-1\big{\}}
\]
is monochromatic with the color $\gamma$.
\end{theorem}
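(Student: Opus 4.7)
The plan is to apply Proposition~\ref{first} for larger and larger values of $n$ and then stitch the resulting finite-dimensional monochromatic grids together into a single infinite sequence $(d_1,d_2,\dots)$ with a single color $\gamma$. The leverage for this stitching is the uniform bound $d_i\le W_i$ provided by Proposition~\ref{first}: the bound $W_i$ depends only on $i$ (and on the fixed parameters $k,c$), not on the dimension $n$ of the finite grid being produced, which puts the whole family of finite data into a finite range at each coordinate and makes a pigeonhole-diagonal argument possible.

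First I would fix any interval $I$ with $|I|=W_1$ (for concreteness $I=[1,W_1]$) and, for each $n\ge 1$, apply Proposition~\ref{first} to the restriction of the given $c$-coloring $\chi$ of $\mathbb{N}$ to $\mathcal{I}(I,n)\subseteq\mathbb{N}$. This yields a color $\gamma^{(n)}\in[c]$, a base point $a^{(n)}$, and positive integers $d_1^{(n)},\dots,d_n^{(n)}$ with $d_i^{(n)}\le W_i$, such that the $n$-dimensional grid $\{a^{(n)}+j_1d_1^{(n)}+\cdots+j_nd_n^{(n)}:0\le j_1,\dots,j_n\le k-1\}$ is monochromatic in color $\gamma^{(n)}$.

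Next I would stabilize these finite data by successive pigeonholes. Because $\gamma^{(n)}\in[c]$, there is an infinite set $T_0\subseteq\mathbb{N}$ on which $\gamma^{(n)}$ takes a constant value $\gamma$. Because $d_1^{(n)}\in[1,W_1]$ for every $n\in T_0$, there is an infinite $T_1\subseteq T_0$ on which $d_1^{(n)}$ takes a constant value $d_1$. Iterating, I obtain a nested chain of infinite sets $T_0\supseteq T_1\supseteq T_2\supseteq\cdots$ with $T_k\subseteq\{n:n\ge k\}$, on which $d_k^{(n)}$ equals some fixed positive integer $d_k\le W_k$. This simultaneously defines the color $\gamma$ and the entire sequence $d_1,d_2,\dots$ demanded by the theorem.

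Finally, to produce the base point $a_n$ for each fixed $n$, I would pick any $m\in T_n$; by construction $m\ge n$ and $d_i^{(m)}=d_i$ for all $i\le n$. Proposition~\ref{first} applied at level $m$ then gives some $a^{(m)}$ such that the full $m$-dimensional grid with common differences $d_1,\dots,d_n,d_{n+1}^{(m)},\dots,d_m^{(m)}$ is monochromatic in color $\gamma$; specializing to $j_{n+1}=\cdots=j_m=0$ and setting $a_n:=a^{(m)}$ produces the required $n$-dimensional monochromatic sub-grid in color $\gamma$. The only substantive point is keeping the common differences from drifting with $n$, and this is handled once and for all by the uniform bound $d_i^{(n)}\le W_i$ in Proposition~\ref{first}; given that bound, the remainder is a routine diagonalization.
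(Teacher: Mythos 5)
Your proof is correct and follows essentially the same route as the paper: apply Proposition~\ref{first} at every level $n$, use the uniform bounds $d_i^{(n)}\le W_i$ to run an iterated pigeonhole (diagonalization) that stabilizes the color and each common difference, and then obtain $a_n$ by specializing a high-dimensional monochromatic grid with $j_{n+1}=\cdots=j_m=0$. The only cosmetic difference is that the paper applies the proposition to a sequence of pairwise disjoint intervals $J_m$ marching to infinity rather than to the nested family $\mathcal{I}(I,n)$; this changes nothing in the argument.
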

\begin{proof}
We fix $k\geq 2, c\geq 1$ and a coloring $\chi\colon\mathbb{N}\rightarrow[c]$. We inductively define sequences of intervals $\langle J_m; m\in\mathbb{N}\rangle$ and $\langle J_m^*; m\in\mathbb{N}\rangle$ as follows. Let $J_1=[1,W_1]$ and put $J_m^*=\big{[}1+\max J_m, W_1+\max J_m\big{]}$, $J_{m+1}=\mathcal{I}(J_m^*,m)$. According to Proposition \ref{first}, for every $m\in\mathbb{N}$, there exist positive integers $e_m,l_{m1},\dots,l_{mm}$ such that the set
\[
A_m:=\big{\{}e_m+j_1l_{m1}+\cdots+j_ml_{mm}|0\leq j_1,\dots,j_m\leq k-1\big{\}}\subseteq J_m
\]
is monochromatic. Moreover, for every $m\in\mathbb{N}$ we have
\[
l_{m1}\leq W_1,\dots,l_{mm}\leq W_1\cdots W_m.
\]
Since the set of colors is finite, there exist $\gamma\in[c]$ and an infinite subset $S_0\subseteq\mathbb{N}$ such that for each $m\in S_0$, the set $A_m$ is monochromatic with the color $\gamma$. Let $S_0= \{m_i|i\in\mathbb{N}\}$ with $m_i<m_{i+1}$, then, from the pigeonhole principle and
\[
l_{m_11},l_{m_21},l_{m_31},\ldots \leq W_1
\]
it follows that there exists an infinite subset $S_1\subseteq S_0$ such that for every $t_1,t_2\in S_1$, we have $l_{t_11}=l_{t_21}$.
Similarly, by iterative use of the pigeonhole principle, we obtain an infinite sequence of subsets
\[
S_0\supseteq S_1\supseteq\cdots \supseteq S_n\supseteq \cdots
\]
such that each $S_n$ is infinite and we have that if $t_1,t_2\in S_n$, then $l_{t_1n}=l_{t_2n}$. For $n\in\mathbb{N}$, let $s_n=\min S_n$ and set $a_n=e_{s_n},d_n=l_{s_nn}$. Now we verify that the set
\[
\big{\{}a_n+j_1d_1+\cdots+j_nd_n|0\leq j_1,\dots,j_n\leq k-1\big{\}}
\]
is monochromatic with the color $\gamma$. To see this, first observe that
\[
a_n+j_1d_1+\cdots+j_nd_n=e_{s_n}+j_1l_{s_11}+\cdots+j_nl_{s_nn}.
\]
As $s_n\in S_0$, we have $e_{s_n}+j_1l_{s_n1}+\cdots+j_nl_{s_nn}\in A_{s_n}$ and therefore
\[
\chi(e_{s_n}+j_1l_{s_n1}+\cdots+j_nl_{s_nn})=\gamma.
\]
Also from $s_n\in S_1,\dots,s_n\in S_{n-1}$, we get
\[
l_{s_n1}=l_{s_11},\dots,l_{s_n(n-1)}=l_{s_{n-1}(n-1)},
\]
which implies that
\begin{eqnarray*}
                      \chi(a_n+j_1d_1+\cdots+j_nd_n) &=&\chi(e_{s_n}+j_1l_{s_11}+\cdots+j_nl_{s_nn})\\
                                                     &=&\chi(e_{s_n}+j_1l_{s_n1}+\cdots+j_nl_{s_nn})=\gamma.
\end{eqnarray*}
This finishes the proof.
\end{proof}

By an easy modification of the arguments, we can actually prove a stronger theorem which is correspondent to Carlson's generalization \cite{carlson} of the Carlson-Simpson theorem. See also Theorem 3 of \cite{thomasshahram}.

\begin{theorem}
Let $c\geq 1$ be a positive integer. Let $2\leq k_1\leq k_2\leq\cdots\leq k_n\leq\cdots$ be a sequence of positive integers. Then for any $c$-coloring of $\mathbb{N}$, there exist positive integers $d_1,d_2,\dots$ and a color $\gamma\in [c]$ such that for every $n\in\mathbb{N}$, there exists a positive integer $a_n$ such that the set
\[
\big{\{}a_n+j_1d_1+\cdots+j_nd_n|0\leq j_1\leq k_1-1,\dots,0\leq  j_n\leq k_n-1\big{\}}
\]
is monochromatic with the color $\gamma$.
\end{theorem}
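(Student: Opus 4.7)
The plan is to repeat the two-step strategy used in the previous theorem, introducing only the obvious modification of letting the van der Waerden numbers depend on $k_n$ at stage $n$. No new ideas are required.

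Step one is to strengthen Proposition \ref{first}. Redefine the sequences by $W_1 = W(k_1,c)$, $c_n = c^{W_n \cdots W_1}$, and $W_{n+1} = W(k_{n+1}, c_n)$, and build the intervals $\mathcal{I}(I,n)$ exactly as before but with these new $W_n$. The claim is: for any $c$-coloring of $\mathcal{I}(I,n)$ there exist $a, d_1, \dots, d_n$ with $d_i \leq W_i$ for $1 \leq i \leq n$ such that
\[
\{a + j_1 d_1 + \cdots + j_n d_n : 0 \leq j_i \leq k_i - 1,\ 1 \leq i \leq n\}
\]
is contained in $\mathcal{I}(I,n)$ and monochromatic. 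The induction is verbatim that of Proposition \ref{first}: the base case $n=1$ is van der Waerden for $k_1$; at the inductive step one forms the same auxiliary coloring $\chi^*$ on $\{0,1,\dots,W_{n+1}-1\}$ with $c_n$ colors, and since $W_{n+1}=W(k_{n+1},c_n)$ one extracts an arithmetic progression $b_1 < b_2 < \cdots < b_{k_{n+1}}$ with common difference $d^* \leq W_{n+1}$ and $\chi^*(b_1) = \cdots = \chi^*(b_{k_{n+1}})$. Applying the induction hypothesis inside $J_{b_1}$ yields sidelengths $k_1, \dots, k_n$, and the shift identity $\mathbb{T}^{jd^*}J_{b_1} = J_{b_{1+j}}$ for $0 \leq j \leq k_{n+1}-1$ extends monochromaticity to sidelength $k_{n+1}$ in the new direction.

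Step two reproduces the argument of the main theorem. Define $J_1 = [1,W_1]$, $J_m^* = [1+\max J_m,\ W_1+\max J_m]$, $J_{m+1} = \mathcal{I}(J_m^*,m)$, and apply the strengthened proposition to obtain, for each $m$, numbers $e_m, l_{m1},\dots,l_{mm}$ with $l_{mi} \leq W_i$ for which
\[
A_m = \{e_m + j_1 l_{m1} + \cdots + j_m l_{mm} : 0 \leq j_i \leq k_i - 1\} \subset J_m
\]
is monochromatic. A first pigeonhole produces $\gamma \in [c]$ and infinite $S_0 \subset \mathbb{N}$ such that $A_m$ has color $\gamma$ for every $m \in S_0$. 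Iterating pigeonhole using the bounds $l_{mi} \leq W_i$ gives a nested chain $S_0 \supset S_1 \supset \cdots$ of infinite sets with $l_{ti}$ constant in $t \in S_i$ for each fixed $i$. Setting $s_n = \min S_n$, $a_n = e_{s_n}$, $d_n = l_{s_n n}$ and substituting $l_{s_n i} = l_{s_i i}$ for $i < n$ (valid since $s_n \in S_i$) exhibits the target set as a subset of $A_{s_n}$, hence monochromatic with color $\gamma$.

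There is essentially no obstacle: the monotonicity $k_1 \leq k_2 \leq \cdots$ is never actually used in the argument (it is only a natural hypothesis on the data), and the rest is a routine bookkeeping adaptation. The single genuine change is replacing $W(k, c_n)$ by $W(k_{n+1}, c_n)$ in the definition of $W_{n+1}$ and letting the sidelength of the $i$th coordinate of each combinatorial box be $k_i$ rather than a common $k$.
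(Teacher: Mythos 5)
Your proposal is correct and is essentially the paper's intended argument: the paper gives no separate proof of this theorem, stating only that it follows ``by an easy modification of the arguments,'' and your modification --- setting $W_{n+1}=W(k_{n+1},c_n)$ and letting the $i$th sidelength be $k_i$ throughout Proposition \ref{first} and the diagonalization --- is exactly that modification. Your side remark that the monotonicity of the $k_i$ is never used is also accurate.
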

\begin{proof}
The proof proceeds exactly as in the proofs of Proposition \ref{first} and Theorem \ref{infvdw}, but this time, we work with $W_n=W(k_n,c_n)$ instead of $W_n=W(k,c_n)$.\end{proof}

\subsection*{Acknowledgment} We would like to thank the referee for careful reading of the paper and useful comments which led to many improvements in the presentation of the paper. The research of the author was in part supported by a grant from IPM (No. 00030403).

\bibliography{reference}
\bibliographystyle{plain}
\end{document}